\documentclass{ifacconf}

\usepackage{graphicx}      
\usepackage{natbib}        

\usepackage{amsmath}
\usepackage{amsfonts}
\usepackage{amssymb}
\usepackage[english]{babel}
\usepackage{enumitem}
\usepackage{mathtools}

\makeatletter
\gdef\th@plain{\upshape
  \def\@begintheorem##1##2{\item[\hskip\labelsep{\theorem@headerfont ##1\ ##2.}]}%
  \def\@opargbegintheorem##1##2##3{\item[\hskip\labelsep{\theorem@headerfont ##1\ ##2.}]{\upshape(##3).}\ \ignorespaces}}
\makeatother

\newtheorem{remark}{Remark}
\begin{document}
\renewcommand{\abstractname}{{\bfseries Abstract: }}
\begin{frontmatter}

\title{On the Stability Analysis of Infinite-Dimensional Port-Hamiltonian Systems Within the Variational Framework} 


\author[First]{Moritz Leitner} 
\author[First]{Lukas Ecker} 
\author[First]{Markus Sch\"oberl}

\address[First]{Institute of Control Systems, Johannes Kepler University Linz, Altenbergerstrasse 69, 4040 Linz, Austria (e-mail: moritz.leitner, lukas.ecker, markus.schoeberl@jku.at).}

\begin{abstract} 
This paper addresses the stability analysis of a class of linear infinite-dimensional port-Hamiltonian systems formulated within the variational framework.
We derive verifiable conditions for the equivalence of the Hamiltonian-induced energy norm and the natural state-space norm, as well as for the generation of a contraction semigroup by the associated system operator.
The semigroup generation result is obtained by establishing dissipativity and verifying the required range condition.
The proposed conditions simplify the functional-analytic stability analysis of this class of systems in a manner similar to established results for the Stokes–Dirac setting.
\end{abstract}

\begin{keyword}
Distributed parameters port Hamiltonian systems, Control of distributed parameter systems, Control of hyperbolic systems and conservation laws, Boundary control of distributed parameter systems, Systems theoretic properties of distributed parameter systems
\end{keyword}

\end{frontmatter}

\section{INTRODUCTION}

The physically-motivated modelling of ordinary differential equations as port-Hamiltonian systems provides a well-established framework for energy-based controller and observer design, see e.g. \cite{ortega}.
Several formulations have been proposed to extend the port-Hamiltonian framework to infinite-dimensional systems, among which the Stokes–Dirac approach as in \cite{stokesdirac1}, \cite{stokesdirac2} and the variational framework as in \cite{jet1}, \cite{jet3} are two prominent examples.
Both approaches have been successfully applied to the design of in-domain and boundary controllers for a variety of systems of partial differential equations (PDEs) with one dimensional spatial domains, see e.g. \cite{Regstokes1}, \cite{Regjet1}, \cite{Regjet2}.

The main difference between the Stokes–Dirac and variational formulations lies in their underlying geometric structures. While the Stokes–Dirac formulation is based on Stokes–Dirac structures, the variational framework is closely related to the calculus of variations and relies on jet-bundle structures. As a consequence, the choice of state variables and the resulting structure of the PDEs differ between the two formulations. In the Stokes–Dirac setting, the state variables are typically chosen such that the energy density can be expressed as a quadratic function of the states. In contrast, within the variational framework, the energy density may depend on both the states and their spatial derivatives. For mechanical systems, the Stokes–Dirac approach commonly uses strains, curvatures, and momenta as state variables, whereas the variational framework allows the direct use of displacements and the corresponding momenta.
This can be advantageous for position-based control schemes such as those considered in \cite{Regjet1}, \cite{Regjet2}.
However, due to the different PDE structures arising from the two formulations, the corresponding stability analysis also differs.

For linear systems in the Stokes–Dirac setting, semigroup generation and stability can be characterized by verifiable matrix-based conditions as stated in \cite{zwart_linear_nodate}. In contrast, for infinite-dimensional port-Hamiltonian systems within the variational framework (ipH-V systems), stability proofs have so far mainly been developed for specific examples see e.g. \cite{malzer_energy-based_2021}, \cite{Ecker}, \cite{malzer_stability_2021}, while general results for this class of systems remain limited.

The goal of this paper is to reduce the complex stability analysis of a general class of ipH-V systems to a set of simply verifiable conditions, thereby greatly reducing the required functional-analytic effort.
For linear ipH-V systems, the proof of stability requires showing that the associated system operator generates a contraction semigroup, which can be established by the application of the Lumer-Phillips theorem see e.g. \cite{zwart_linear_nodate}.
In order to do so, we make use of an energy norm induced by the Hamiltonian of the system, where we have to show that it constitutes a norm of the underlying state space.
We show how under certain assumptions, which naturally occur for physical systems, these steps can be reduced to an examination of the boundary conditions.

The remainder of this paper is organized as follows.
Section \ref{sec:theory} introduces the notational, differential-geometric and functional-analytic tools used throughout the paper.
In Section \ref{sec:sys}, the considered class of ipH-V systems is formulated in a differential-geometric setting and subsequently represented in a form suitable for functional-analytic analysis.
Next, Section \ref{sec:main} presents the main result of this contribution together with a detailed proof.
Finally, Section \ref{sec:example} illustrates the proposed framework using a boundary controlled Timoshenko beam.

\section{THEORETICAL BACKGROUND}\label{sec:theory}

\subsection{Notation}

In this contribution, we consider linear distributed-parameter systems defined on a one-dimensional spatial domain with spatial coordinate $\xi \in [0,L]$.
We will make use of the standard vector-valued Hilbert spaces $L^2([0, L]; \mathbb{R}^n)$, as well as the Sobolev spaces $W^{1, 2}([0, L]; \mathbb{R}^n)$, which will be abbreviated as $L^2$ and $W^{1, 2}$. 
The standard inner products and norms will be denoted as $\bigr<., .\bigl>_{L^2}$, $\bigr<., .\bigl>_{W^{1, 2}}$, $||.||_{L^2}$, and $||.
||_{W^{1, 2}}$ respectively, see e.g. \cite{brezis_functional_2011}.
Furthermore, $||.||$ denotes the Euclidean norm of a vector, $I$ denotes the identity matrix, $\sigma_{\max}(.)$ denotes the largest singular value of a matrix and $\mathcal{L}(\mathcal{A}; \mathcal{B})$ denotes the set of all linear operators mapping from $\mathcal{A}$ to $\mathcal{B}$.

\subsection{Differential Geometric Preliminaries}

The upcoming system description relies on the use of jet manifolds, which contain the independent spatial coordinate $\xi$, $\dim(z) = n$ dependent variables $z$ as well as the derivatives of $z$ with respect to $\xi$ denoted as jet-variables.
In our case only the first and second order jet-variables $z_\xi$ and $z_{\xi\xi}$ are required.
Furthermore, the abbreviations $\partial_\xi = \partial /\partial \xi$, $\partial_z = \partial/\partial z$ and $\partial_z^\xi = \partial / \partial z_\xi$ will be used.
Given a map $z = \phi(\xi)$ that relates the dependent to the independent coordinates we can state
$
    z_\xi \circ \phi = \partial_\xi \phi(\xi)$ and $ z_{\xi\xi} \circ \phi = \partial_\xi  (\partial_\xi \phi(\xi))$.
This allows us to introduce the first $\mathcal{J}^1(\mathcal{E})$ and second $\mathcal{J}^2(\mathcal{E})$ jet manifold with coordinates $(\xi, z, z_\xi)$ and $(\xi, z, z_\xi, z_{\xi\xi})$ respectively, where $\mathcal{E}$ is equipped with the coordinates $(\xi, z)$.
Throughout this contribution, we will deal with evolution equations of the form
 \begin{equation} \label{eq:PDE}
    \dot{z} = v\left(\xi, z, z_\xi, z_{\xi\xi}\right)
 \end{equation}
together with suitable boundary conditions.
Furthermore, we are interested in densities $\mathfrak{F} = \mathcal{F} \mathrm{d}\xi$, with $\mathcal{F} \in C'(\mathcal{J}^1(\mathcal{E}))$ that can be integrated over the spatial domain $F = \int_0^L\mathcal{F} \mathrm{d}\xi$.
\footnote{Here $C'(\mathcal{J}^1(\mathcal{E}))$ denotes the set of functions on the manifold $\mathcal{J}^1(\mathcal{E})$ that are sufficiently often differentiable.}
The change of $F$ along solutions of \eqref{eq:PDE}, assuming they exist, can now be expressed as
    $\dot{F} = \int_0^L (\partial_z \mathcal{F})v + (\partial_z^\xi \mathcal{F})(d_\xi v) \mathrm{d} \xi \, ,$
with the total derivative $d_\xi = \partial_\xi + z_\xi\partial_z + z_{\xi\xi} \partial_z^\xi$.
Integration by parts together with Stokes's theorem results in
 \begin{equation} \label{eq:lie}
    \dot{F} = \int_0^L \underbrace{\left(\partial_z \mathcal{F} - d_\xi\left(\partial_z^\xi \mathcal{F}\right)\right)}_{\delta \mathfrak{F}}v \mathrm{d} \xi + (\underbrace{(\partial_z^\xi \mathcal{F})}_{\delta^\partial \mathfrak{F}}v) \bigr|_0^L \, ,
\vspace{-2 pt}
 \end{equation}
with the variational derivative $\delta$ and the boundary operator $\delta^\partial$, see e.g. \cite{jet3} for a precise differential geometric treatment of this topic.
Accordingly, the second term on the right hand side of \eqref{eq:lie} is denoted as a boundary port.
This relation is a key result that will allow us to study PDEs as ipH-V systems in the upcoming sections.

\subsection{Functional Analytic Preliminaries}

Next, we will discuss some functional analytic identities that will be frequently used in the following derivations.
Given $g, h \in L^2$, if
 \begin{equation} \label{eq:weak}
    \int_0^L \partial_\xi v^\mathrm{T} g \mathrm{d \xi} = - \int_0^L v^\mathrm{T} h \mathrm{d} \xi
 \end{equation}
holds for every infinitely differentiable $v: [0, L] \rightarrow\mathbb{R}^n$ with $v(0) = v(L) = 0$, then $h$ is the weak derivative of $g$, denoted as $h = D_\xi g$ (\cite{brezis_functional_2011}).
Considering $K = K^\mathrm{T}: [0, L] \rightarrow \mathbb{R}^{n \times n}$, the assumptions $\gamma_1 I \leq K$ and $0< \gamma_1$ allow us to state
 \begin{equation} \label{eq:lower}
        \gamma_1||g||^2_{L^2} \leq \int_0^L g^\mathrm{T} K g \mathrm{d}\xi \, ,
 \end{equation}
while, under the assumptions $0 \leq K \leq \gamma_2 I$ and $\gamma_2 < \infty$, 
 \begin{equation} \label{eq:upper}
        \biggl|\int_0^L g^\mathrm{T} K h \mathrm{d}\xi\biggr| \leq \gamma_2 ||g||_{L^2}||h||_{L^2}
 \end{equation}
holds (\cite{zwart_linear_nodate}).
Furthermore, we can state a similar result for a general, not necessarily symmetric matrix $B:[0, L] \rightarrow \mathbb{R}^{n \times n}$
 \begin{equation} \label{eq:upperB}
    \biggl|\int_0^L g^\mathrm{T} B h \mathrm{d}\xi\biggr| \leq \operatorname{ess}\sup \left(\sigma_{\max}(B)\right) ||g||_{L^2}||h||_{L^2} \, .
 \end{equation}
Additionally, for $g \in W^{1, 2}$, we will frequently use the so called trace inequalities
 \begin{equation} \label{eq:trace}
    ||g(0)|| \leq \gamma_0 ||g||_{W^{1, 2}}\, , \quad ||g(L)|| \leq \gamma_L ||g||_{W^{1, 2}} \, ,
 \end{equation}
as in \cite{grisvard_elliptic_1985}.
Next, we want to recall two well-known theorems related to differential operators.
Let us consider linear evolution equations of the form $\dot{x} = \mathcal{A}x$ with suitable boundary conditions and $x \in \mathcal{X}$ denoting the state space.
\begin{thm}[Lumer-Phillips Theorem \cite{zwart_linear_nodate}, Theorem 6.1.7]\label{th:lumer}
    Let $\mathcal{A}$ be a linear operator with domain $\mathcal{D}(\mathcal{A)}$ on a Hilbert space $\mathcal{X}$.
    Then $\mathcal{A}$ is the infinitesimal generator of a contraction semigroup on $\mathcal{X}$ if and only if $\mathcal{A}$ is dissipative and $\mathrm{ran}\left(I - \mathcal{A}\right) = \mathcal{X}$.
\end{thm}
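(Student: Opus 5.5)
The plan is to derive both directions from the Hille–Yosida theorem: a closed, densely defined operator $\mathcal{A}$ generates a contraction semigroup if and only if $(0,\infty)\subset\rho(\mathcal{A})$ and $\|(\lambda I-\mathcal{A})^{-1}\|\le 1/\lambda$ for all $\lambda>0$. I use dissipativity in the Hilbert-space sense $\langle \mathcal{A}x,x\rangle\le 0$ (real part in the complex case) for all $x\in\mathcal{D}(\mathcal{A})$. The key elementary fact is that dissipativity is equivalent to
\begin{equation*}
\|(\lambda I-\mathcal{A})x\|\ge\lambda\|x\| \quad \text{for all } \lambda>0,\ x\in\mathcal{D}(\mathcal{A}).
\end{equation*}
This follows from expanding
\begin{equation*}
\|(\lambda I-\mathcal{A})x\|^2=\lambda^2\|x\|^2-2\lambda\langle \mathcal{A}x,x\rangle+\|\mathcal{A}x\|^2 .
\end{equation*}
For the converse, divide the inequality $\|(\lambda I-\mathcal{A})x\|^2\ge\lambda^2\|x\|^2$ by $\lambda$ and let $\lambda\to\infty$.

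\emph{Necessity.} Let $T(t)$ be the contraction semigroup generated by $\mathcal{A}$, and take $x\in\mathcal{D}(\mathcal{A})$. By Cauchy–Schwarz, $\langle T(t)x-x,x\rangle\le(\|T(t)x\|\,\|x\|-\|x\|^2)\le 0$. Dividing by $t$ and letting $t\downarrow 0$ gives $\langle \mathcal{A}x,x\rangle\le 0$. The range condition follows because Hille–Yosida gives $1\in\rho(\mathcal{A})$, so $I-\mathcal{A}$ is onto $\mathcal{X}$.

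\emph{Sufficiency.} I would proceed in three steps.

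First, the estimate above with $\lambda=1$ shows that $I-\mathcal{A}$ is injective. Together with the range condition, $(I-\mathcal{A})^{-1}$ is an everywhere defined operator with norm at most $1$. A bounded inverse is closed, so $\mathcal{A}$ is closed.

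Second, I extend surjectivity from $\lambda=1$ to every $\lambda>0$ by a continuation argument. Suppose $\lambda_0\in\rho(\mathcal{A})$ with $\|(\lambda_0 I-\mathcal{A})^{-1}\|\le1/\lambda_0$. Writing
\begin{equation*}
\lambda I-\mathcal{A}=\bigl(I+(\lambda-\lambda_0)(\lambda_0 I-\mathcal{A})^{-1}\bigr)(\lambda_0 I-\mathcal{A}),
\end{equation*}
a Neumann series shows $\lambda\in\rho(\mathcal{A})$ whenever $|\lambda-\lambda_0|<\lambda_0$. The dissipativity estimate then gives $\|(\lambda I-\mathcal{A})^{-1}\|\le 1/\lambda$ automatically. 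Starting at $\lambda_0=1$ and iterating, this covers $(0,2)$, then $(0,4)$, and so on, hence all of $(0,\infty)$.

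Third, I show that $\mathcal{D}(\mathcal{A})$ is dense, which I expect to be the main obstacle, since it is not part of the hypotheses. Here the Hilbert structure is used. Let $y\perp\mathcal{D}(\mathcal{A})$. By the range condition, $y=(I-\mathcal{A})x$ for some $x\in\mathcal{D}(\mathcal{A})$. Then
\begin{equation*}
0=\langle (I-\mathcal{A})x,x\rangle=\|x\|^2-\langle \mathcal{A}x,x\rangle\ge\|x\|^2 ,
\end{equation*}
so $x=0$ and hence $y=0$. With closedness, density, and the resolvent bound established, Hille–Yosida yields that $\mathcal{A}$ generates a contraction semigroup. This is the form in which the theorem is later applied to the ipH-V system operator: its proof reduces to checking dissipativity and solvability of $(I-\mathcal{A})x=y$.
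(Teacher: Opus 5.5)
The paper gives no proof of this statement. It imports the result from \cite{zwart_linear_nodate} (Theorem 6.1.7) and uses it only as a black box in the proof of Theorem \ref{th:main}.

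Your argument is correct and complete, and it is the standard Hilbert-space route through Hille--Yosida. Its ingredients are:
\begin{itemize}
\item the a priori bound $\|(\lambda I-\mathcal{A})x\|\ge\lambda\|x\|$;
\item closedness of $\mathcal{A}$ from the bounded, everywhere-defined inverse $(I-\mathcal{A})^{-1}$;
\item Neumann-series continuation of the range condition along $(0,\infty)$;
\item the orthogonality argument for density of $\mathcal{D}(\mathcal{A})$.
\end{itemize}

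Two remarks on what your self-contained version adds relative to how the paper uses the result.

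First, your continuation step works from any starting point $\lambda_0>0$, not only $\lambda_0=1$. Dissipativity plus $\mathrm{ran}(\lambda_0 I-\mathcal{A})=\mathcal{X}$ already gives a bounded inverse of norm at most $1/\lambda_0$, and hence closedness. From $\lambda_0$ your argument reaches $(0,2\lambda_0)$ and then, by iteration, all of $(0,\infty)$. It therefore also proves Theorem \ref{th:pazy}, which the paper quotes separately from Pazy precisely to pass from a small $\lambda_0$ to $\lambda=1$.

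Second, your density step is exactly where the Hilbert-space hypothesis enters. In a general Banach space, density of $\mathcal{D}(\mathcal{A})$ must be assumed outright, or reflexivity used. This is why the statement can omit the density hypothesis.

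The only point to keep explicit is in the complex case: take real parts in $0=\langle(I-\mathcal{A})x,x\rangle$ before concluding $\|x\|^2\le 0$, as you indicated at the start.
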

\begin{thm}[\cite{pazy_semigroups_nodate}, Theorem 4.5] \label{th:pazy}
    Let $\mathcal{A}$ be a dissipative operator in $\mathcal{X}$.
    If for some $\lambda_0 > 0$, $\mathrm{ran}\left(\lambda_0 I - \mathcal{A}\right) = \mathcal{X}$ then $\mathrm{ran}\left(\lambda I - \mathcal{A}\right) = \mathcal{X}$ for all $\lambda > 0$. 
\end{thm}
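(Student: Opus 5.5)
The statement is the classical result from Pazy: if a dissipative operator $\mathcal{A}$ satisfies $\mathrm{ran}(\lambda_0 I - \mathcal{A}) = \mathcal{X}$ for one $\lambda_0>0$, then the range condition holds for every $\lambda>0$. The plan is a continuity (perturbation) argument on the resolvent set, using the a priori bound that dissipativity provides.

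\emph{Step 1 (a priori bound).} Dissipativity means $\langle \mathcal{A}x, x\rangle \leq 0$ for all $x \in \mathcal{D}(\mathcal{A})$. For any $\lambda>0$ this gives
\begin{equation*}
\lambda\|x\|^2 \leq \lambda\|x\|^2 - \langle \mathcal{A}x,x\rangle = \langle(\lambda I-\mathcal{A})x,x\rangle \leq \|(\lambda I-\mathcal{A})x\|\,\|x\| ,
\end{equation*}
and hence $\|(\lambda I - \mathcal{A})x\| \geq \lambda \|x\|$. Two consequences follow.
\begin{enumerate}[label=(\roman*)]
\item $\lambda I - \mathcal{A}$ is injective.
\item Whenever $\lambda I - \mathcal{A}$ is surjective, its inverse is bounded with $\|(\lambda I-\mathcal{A})^{-1}\| \leq 1/\lambda$.
\end{enumerate}
In particular, $R(\lambda_0) := (\lambda_0 I - \mathcal{A})^{-1}$ exists as a bounded operator on all of $\mathcal{X}$. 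This also shows that $\mathcal{A}$ is closed: the inverse of a closed operator is closed, and $R(\lambda_0)$ is bounded and everywhere defined.

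\emph{Step 2 (local extension).} Take $\mu > 0$ with $\mathrm{ran}(\mu I - \mathcal{A}) = \mathcal{X}$. For $\lambda$ with $|\lambda - \mu| < \mu$, factor
\begin{equation*}
\lambda I - \mathcal{A} = \bigl(I + (\lambda-\mu)R(\mu)\bigr)(\mu I - \mathcal{A}).
\end{equation*}
By Step 1, $\|(\lambda-\mu)R(\mu)\| \leq |\lambda-\mu|/\mu < 1$. A Neumann series therefore makes $I + (\lambda-\mu)R(\mu)$ a bijection of $\mathcal{X}$. Since $\mu I - \mathcal{A}$ maps $\mathcal{D}(\mathcal{A})$ onto $\mathcal{X}$, the composition is onto, so $\mathrm{ran}(\lambda I-\mathcal{A}) = \mathcal{X}$ for all $\lambda \in (0, 2\mu)$.

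\emph{Step 3 (covering $(0,\infty)$).} Let $S = \{\lambda>0 : \mathrm{ran}(\lambda I - \mathcal{A}) = \mathcal{X}\}$.
\begin{enumerate}[label=(\roman*)]
\item Step 2 with $\mu=\lambda_0$ gives $(0,2\lambda_0)\subset S$.
\item Step 2 with $\mu = \tfrac{3}{2}\lambda_0$ gives $(0,3\lambda_0)\subset S$.
\end{enumerate}
Iterating, if $(0,c)\subset S$ then taking $\mu$ close to $c$ yields $(0,2\mu)\subset S$ with $2\mu > c$. The radius $\mu$ of the allowed step grows with $\mu$ itself, so the extension never stalls. Concretely, the suprema satisfy $c_{k+1} \geq \tfrac{3}{2}c_k$, which diverges, so $S = (0,\infty)$.

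The only delicate point is the uniformity of the step size. It rests on the bound $\|R(\mu)\|\leq 1/\mu$, which holds for every $\mu$ in $S$ because of dissipativity; without that bound the radius of the Neumann-series extension could shrink and fail to reach every $\lambda > 0$. Once Step 1 is in place, the remaining steps are routine.
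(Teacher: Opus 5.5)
Your proof is correct, but it takes a different route from the source. The paper gives no proof of this result and simply cites Pazy.

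\textbf{Pazy's argument.} It is a connectedness argument on your set $S$, and it first establishes that $\mathcal{A}$ is closed.
\begin{enumerate}[label=(\roman*)]
\item $S$ is open in $(0,\infty)$ because the resolvent set of the closed operator $\mathcal{A}$ is open. The size of the neighbourhood is not exploited.
\item $S$ is closed in $(0,\infty)$. Take $\lambda_n \in S$ with $\lambda_n \to \lambda > 0$ and solutions of $\lambda_n x_n - \mathcal{A}x_n = y$. The dissipativity estimate gives $\lambda_m\|x_n - x_m\| \le |\lambda_n - \lambda_m|\,\|x_n\| \le |\lambda_n - \lambda_m|\,\|y\|/\lambda_n$, so $(x_n)$ is Cauchy. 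Then $\mathcal{A}x_n = \lambda_n x_n - y \to \lambda x - y$, and closedness of $\mathcal{A}$ yields $\lambda x - \mathcal{A}x = y$.
\end{enumerate}

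\textbf{Your argument.} You make the openness step quantitative. The bound $\|R(\mu)\| \le 1/\mu$ guarantees a Neumann-series neighbourhood of radius $\mu$ around every $\mu \in S$. That lets you bootstrap geometrically to all of $(0,\infty)$, so the closed-in-$(0,\infty)$ step is never needed. Your remark that $\mathcal{A}$ is closed is true but unused.

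\textbf{Comparison.} Pazy's version packages the argument topologically, at the cost of a limit argument and the closedness of $\mathcal{A}$. Yours is more elementary and explicit.

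Both rely only on the estimate $\|(\lambda I - \mathcal{A})x\| \ge \lambda\|x\|$. Hence both carry over verbatim to Banach spaces, where this estimate is the definition of dissipativity. In the paper's setting, dissipativity is $\mathrm{Re}\langle \mathcal{A}x, x\rangle_{\mathcal{H}} \le 0$ with respect to the energy inner product. For the real spaces used there, this is exactly the condition you start from.
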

To conclude this section, two more well known results that will be used in the subsequent analysis are stated.
\begin{thm}[Lax-Milgram Lemma \cite{grisvard_elliptic_1985}, Lemma 2.2.1.1]
    Let $a$ be a bounded and coercive bilinear form on a Hilbert space $\mathcal{Q}$.
    For every bounded linear functional $F$ on $\mathcal{Q}$, there exists a unique solution $q \in \mathcal{Q}$ such that
    $
        a(q, v) = F(v)
    $
    for every $v \in \mathcal{Q}$.
\end{thm}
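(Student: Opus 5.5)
The Lax–Milgram Lemma is a classical result, and the paper only recalls it. So the plan is the standard Hilbert-space proof via the Riesz representation theorem combined with a closed-range argument. Let $\langle\cdot,\cdot\rangle_{\mathcal{Q}}$ denote the inner product on $\mathcal{Q}$. Boundedness means $|a(q,v)|\le M\|q\|_{\mathcal{Q}}\|v\|_{\mathcal{Q}}$, and coercivity means $a(v,v)\ge \alpha\|v\|_{\mathcal{Q}}^2$ with $\alpha>0$.

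The first step recasts the problem as an operator equation. For fixed $q\in\mathcal{Q}$, the map $v\mapsto a(q,v)$ is a bounded linear functional. By the Riesz representation theorem there is a unique element $Aq\in\mathcal{Q}$ with $a(q,v)=\langle Aq,v\rangle_{\mathcal{Q}}$ for all $v$. Linearity of $a$ in its first argument gives linearity of $A$, and boundedness gives $\|Aq\|_{\mathcal{Q}}\le M\|q\|_{\mathcal{Q}}$. Likewise, $F$ is represented by some $f\in\mathcal{Q}$ with $F(v)=\langle f,v\rangle_{\mathcal{Q}}$. The problem therefore becomes: find a unique $q$ with $Aq=f$, i.e. show that $A$ is bijective.

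The second step establishes injectivity and closed range. Coercivity together with Cauchy–Schwarz yields
\begin{equation*}
\alpha\|q\|_{\mathcal{Q}}^2\le a(q,q)=\langle Aq,q\rangle_{\mathcal{Q}}\le\|Aq\|_{\mathcal{Q}}\|q\|_{\mathcal{Q}},
\end{equation*}
and hence $\alpha\|q\|_{\mathcal{Q}}\le\|Aq\|_{\mathcal{Q}}$. This bound gives injectivity at once, which is also the uniqueness claim. For closedness, let $Aq_k\to y$. Then $(Aq_k)$ is Cauchy, so by the bound $(q_k)$ is Cauchy and converges to some $q$. Continuity of $A$ then gives $Aq=y$, so $\mathrm{ran}(A)$ is closed.

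The third step shows surjectivity, which is the only point needing a little care. Since $\mathrm{ran}(A)$ is a closed subspace, we have $\mathcal{Q}=\mathrm{ran}(A)\oplus\mathrm{ran}(A)^\perp$. Take $w\in\mathrm{ran}(A)^\perp$. Then
\begin{equation*}
0=\langle Aw,w\rangle_{\mathcal{Q}}=a(w,w)\ge\alpha\|w\|_{\mathcal{Q}}^2,
\end{equation*}
so $w=0$ and $A$ is onto. Consequently $q=A^{-1}f$ satisfies $a(q,v)=F(v)$ for every $v\in\mathcal{Q}$, and it is unique by the second step. As a by-product, $\|q\|_{\mathcal{Q}}\le\alpha^{-1}\|F\|$.

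Note that symmetry of $a$ is never used. This matters for the later range condition in Theorem~\ref{th:lumer}, where the bilinear forms arising from $\lambda I-\mathcal{A}$ need not be symmetric.
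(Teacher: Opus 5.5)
Your proof is correct. It is the standard Riesz-representation argument: the lower bound $\alpha\|q\|_{\mathcal{Q}}\le\|Aq\|_{\mathcal{Q}}$ gives injectivity and closed range, and coercivity forces $\mathrm{ran}(A)^\perp=\{0\}$. The paper gives no proof of this lemma and only cites Grisvard, so there is no competing argument to compare.

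One correction concerns your closing remark, which is about the paper rather than the lemma. The paper invokes the lemma only once, to solve $\mathcal{A}x=y$ when showing that $\mathcal{A}^{-1}$ exists and is bounded. It then obtains $\mathrm{ran}(I-\mathcal{A})=\mathcal{X}$ from this via the small-$\lambda_0$ argument and Theorem~\ref{th:pazy}, so no bilinear form for $\lambda I-\mathcal{A}$ is ever set up.

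Moreover, the form the paper does use, $a(q,v)=\langle q,v\rangle_{\mathcal{H}_q}+\langle q,v\rangle_{H_B}$, is symmetric. Eliminating $p$ from $(\lambda I-\mathcal{A})x=y$ directly would also give a symmetric form, since $M$, $\mathcal{R}_{22}$, $K$ and $T$ are all symmetric.

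In the paper's application the lemma therefore reduces to the Riesz theorem for the inner product $a$ on $\mathcal{Q}$, which is equivalent to the $W^{1,2}$ inner product by Theorem~\ref{th:normeq}. Your non-symmetric version is more general than needed there, which is harmless.
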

\begin{thm}[Peetre-Tartar Lemma \cite{ern_finite_2021}, Lemma A.20] \label{th:peetre}
Let $\mathcal{Q}$, $\mathcal{Y}$, $\mathcal{Z}$ be three Banach spaces.
Let $\mathcal{U} \in \mathcal{L}(\mathcal{Q}; \mathcal{Y})$ be an injective operator and let $\mathcal{V}\in \mathcal{L}(\mathcal{Q}; \mathcal{Z})$ be a compact operator.
Assume there is $\alpha > 0$ such that $\alpha ||q||_\mathcal{Q} \leq ||\mathcal{U}(q)||_\mathcal{Y} + ||\mathcal{V}(q)||_\mathcal{Z}$ for all $q \in \mathcal{Q}$.
Then there is $\beta > 0$ such that
$
    \beta ||q||_\mathcal{Q} \leq ||\mathcal{U}(q)||_\mathcal{Y}, \quad \forall q \in \mathcal{Q}
$.
\end{thm}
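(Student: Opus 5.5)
We argue by contradiction, using the compactness of $\mathcal{V}$ to turn a minimizing sequence into a Cauchy sequence in $\mathcal{Q}$. The hypothesis $\alpha ||q||_\mathcal{Q} \leq ||\mathcal{U}(q)||_\mathcal{Y} + ||\mathcal{V}(q)||_\mathcal{Z}$ then converts convergence of the images into convergence of the sequence itself, and injectivity of $\mathcal{U}$ rules out a nonzero limit.

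Suppose no $\beta > 0$ with $\beta ||q||_\mathcal{Q} \leq ||\mathcal{U}(q)||_\mathcal{Y}$ exists. Then for every $k \in \mathbb{N}$ there is $q_k \in \mathcal{Q}$ with $||\mathcal{U}(q_k)||_\mathcal{Y} < \tfrac{1}{k}||q_k||_\mathcal{Q}$. In particular $q_k \neq 0$, so by linearity we may normalize to $||q_k||_\mathcal{Q} = 1$ and $\mathcal{U}(q_k) \to 0$ in $\mathcal{Y}$. The sequence $(q_k)$ is bounded and $\mathcal{V}$ is compact, so after passing to a subsequence (not relabelled) $\mathcal{V}(q_k)$ converges in $\mathcal{Z}$. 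Hence $(\mathcal{V}(q_k))$ is Cauchy in $\mathcal{Z}$. The sequence $(\mathcal{U}(q_k))$ is also Cauchy in $\mathcal{Y}$, being convergent to $0$.

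Next we apply the assumed estimate to differences. For all $k, m$,
\begin{equation*}
\alpha ||q_k - q_m||_\mathcal{Q} \leq ||\mathcal{U}(q_k) - \mathcal{U}(q_m)||_\mathcal{Y} + ||\mathcal{V}(q_k) - \mathcal{V}(q_m)||_\mathcal{Z} ,
\end{equation*}
and both terms on the right tend to zero as $k, m \to \infty$. Thus $(q_k)$ is Cauchy in $\mathcal{Q}$. Since $\mathcal{Q}$ is a Banach space, $q_k \to q$ for some $q \in \mathcal{Q}$, and continuity of the norm gives $||q||_\mathcal{Q} = 1$. Since $\mathcal{U}$ is bounded, $\mathcal{U}(q) = \lim_k \mathcal{U}(q_k) = 0$. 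Injectivity of $\mathcal{U}$ then forces $q = 0$, which contradicts $||q||_\mathcal{Q} = 1$. Hence the desired $\beta$ exists.

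The only delicate step is obtaining strong convergence of $(q_k)$ in $\mathcal{Q}$. Since $\mathcal{Q}$ is merely Banach, we cannot rely on weak compactness of bounded sets. The plan avoids this by never extracting a subsequence in $\mathcal{Q}$ directly: compactness is used only in $\mathcal{Z}$, and the assumed inequality, applied to $q_k - q_m$, transfers the Cauchy property back to $\mathcal{Q}$. The remaining steps are routine uses of linearity, continuity and completeness.
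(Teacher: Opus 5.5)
Your proof is correct. You normalize a violating sequence, extract a subsequence along which $\mathcal{V}(q_k)$ converges, apply the assumed estimate to $q_k - q_m$ to obtain a Cauchy sequence in $\mathcal{Q}$, and contradict the injectivity of $\mathcal{U}$ at the limit; the paper does not prove this lemma itself but quotes it from Ern and Guermond, and as far as I recall the standard proof of this lemma, including theirs, runs along these same lines.
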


\section{System Representation}\label{sec:sys}

A special case of the autonomous evolution equations \eqref{eq:PDE} are ipH-V systems, which can be written as
 \begin{equation} \label{eq:iphd}
    \dot{x} = \left(\mathcal{J} - \mathcal{R}\right)(\delta \mathfrak{H})^\mathrm{T}\, ,
 \end{equation}
with the skew-symmetric interconnection tensor $\mathcal{J}$, the symmetric positive semidefinite dissipation map $\mathcal{R}$, the Hamiltonian density $\mathfrak{H} = \mathcal{H}\mathrm{d}\xi$ and the variational derivative $\delta$ as well as a set of suitable Robin and Dirichlet boundary conditions.
As the evolution equations \eqref{eq:PDE} contain spatial derivatives up to the order two we restrict ourselves to $\mathcal{H} \in C'(\mathcal{J}^1(\mathcal{E}))$.
Furthermore, we deal with the canonical case, which describes systems of $n$ hyperbolic PDEs stemming from an Euler-Lagrange formalism, extended by dissipative effects, reformulated to systems of $2n$ PDEs by a Legendre transformation, see e.g. \cite{jet1}.
Therefore, the state can be separated into $\dim(q) = n$ displacements and $\dim(p) = n$ corresponding momenta, with $p = M \dot{q}$.
Thus, $x = \begin{bmatrix}
    q^\mathrm{T} & p^\mathrm{T}
\end{bmatrix}^\mathrm{T}$ and
 \begin{equation*}
    \mathcal{J} = \begin{bmatrix}
        0 & I \\
        -I & 0
    \end{bmatrix} , \quad \mathcal{R} = \begin{bmatrix}
        0 & 0 \\
        0 & \mathcal{R}_{22}
    \end{bmatrix} .
 \end{equation*}
As in this case, the Hamiltonian density is dependent on the deflections $q$, their spatial derivatives $q_\xi$ and the momenta $p$, we introduce the state space as $\mathcal{X} = \mathcal{Q} \times L^2$ accordingly.
Here, similarly to \cite{kuchment_quantum_2004},
 \begin{equation} \label{eq:ass1}
    \mathcal{Q} = \{q \in W^{1, 2} \bigr| (I-P) q_\partial = 0\} \, , \ \quad q_\partial = \begin{bmatrix}
    q\bigr|_L \\
    q\bigr|_0
\end{bmatrix} ,
 \end{equation}
contains the Dirichlet boundary conditions with the orthogonal projector
 \begin{equation} \label{eq:ass2} \tag{A1}
    P=P^\mathrm{T}=P^2 \, .
 \end{equation}
It is important to note, that $I - P$ is itself an orthogonal projector.
The inner product and norm are given by $\bigl<x, \bar{x}\bigr>_\mathcal{X} = \bigl<q, \bar{q}\bigr>_{W^{1, 2}} + \bigl<p, \bar{p}\bigr>_{L^2}$ and $||x||^2_\mathcal{X} = \bigl< x, x\bigr>_\mathcal{X}$ respectively.
Based on $\mathcal{X}$ we can introduce the bilinear forms
 \begin{equation*}
    \bar{\mathcal{H}}_q(q, q_\xi, \bar{q}, \bar{q}_\xi) = \frac{1}{2}\begin{bmatrix}
        q^\mathrm{T} & q_\xi^\mathrm{T}
    \end{bmatrix}
    \underbrace{\begin{bmatrix}
        A & B \\
        B^\mathrm{T} & C
    \end{bmatrix}}_S
    \begin{bmatrix}
        \bar{q} \\
        \bar{q}_\xi
    \end{bmatrix}
 \end{equation*}
and 
 \begin{equation*}
    \bar{\mathcal{H}}_p(p, \bar{p}) = \frac{1}{2} p^\mathrm{T} M^{-1} \bar{p} \, .
 \end{equation*}
The matrix $B: [0, L] \rightarrow \mathbb{R}^{n \times n}$ and the symmetric matrices $A,C, M, \mathcal{R}_{22}: [0, L] \rightarrow \mathbb{R}^{n \times n}$ satisfy  the assumptions
\begin{gather}
    0 \leq A,C, M, \mathcal{R}_{22} \leq \gamma_s I \tag{A2} \label{eq:ass3} \\
    \operatorname{ess}\sup \left(\sigma_{\max}(B)\right) \leq \gamma_s \tag{A3} \label{eq:ass4} \\
    0 \leq A - B C^{-1} B^\mathrm{T} \tag{A4} \label{eq:ass5} \\
    \gamma_i I \leq C, M \tag{A5} \label{eq:ass6} 
\end{gather}
with some finite constants $\gamma_s < \infty$ and $0 < \gamma_i$.
\footnote{Requiring also $\gamma_i I \leq A$ would be too restrictive for many physical systems.}
Together these conditions also ensure that $S$ is positive semidefinite $S\geq 0$.
Building upon these constructions, candidates for the energy inner product and norm are given by
 \begin{equation} \label{eq:energiIP}
    \bigr<x, \bar{x}\bigl>_\mathcal{H} = \bigl<q, \bar{q}\bigl>_{\mathcal{H}_q} + \bigl<q, \bar{q}\bigl>_{H_B} + \bigl<p, \bar{p}\bigl>_{\mathcal{H}_p}
 \end{equation}
and
 \begin{equation} \label{eq:H}
    ||x||^2_\mathcal{H} = \bigr<x, x\bigl>_\mathcal{H} = \underbrace{\bigl<q, q\bigl>_{\mathcal{H}_q}}_{||q||^2_{\mathcal{H}_q}} + \underbrace{\bigl<q, q\bigl>_{H_B}}_{||q||^2_{H_B}} + \underbrace{\bigl<p, p\bigl>_{\mathcal{H}_p}}_{||p||^2_{\mathcal{H}_p}} \, ,
 \end{equation}
respectively, with $\bigl<q, \bar{q}\bigl>_{\mathcal{H}_q} = \int_0^L \bar{\mathcal{H}}_q(q, q_\xi, \bar{q}, \bar{q}_\xi) \mathrm{d}\xi$, $\bigl<p, \bar{p}\bigl>_{\mathcal{H}_p} = \int_0^L \bar{\mathcal{H}}_p(p, \bar{p}) \mathrm{d}\xi$ and $\bigl<q, \bar{q}\bigl>_{H_B} = \frac{1}{2} q_\partial^\mathrm{T} T \bar{q}_\partial$ satisfying
 \begin{equation} \label{eq:ass7}
    T = T^\mathrm{T} \geq 0\, . \tag{A6}
 \end{equation}
It is important to note that, in the case of a rank deficient $A$, $\bigl<q, \bar{q}\bigl>_{\mathcal{H}_q}$ might not constitute an inner product under certain boundary conditions, which motivates the inclusion of $\bigl<q, \bar{q}\bigl>_{H_B}$.
\footnote{This addition will also allow us to include boundary proportional controllers in the proposed system description.}
To ensure that the energy inner product turns $\mathcal{X}$ into a Hilbert space, we establish the equivalence of the induced energy norm and the natural state-space norm. Specifically, we seek positive constants $\bar{\gamma}_i$ and $\bar{\gamma}_s$ such that
 \begin{equation} \label{eq:ineq}
    \bar{\gamma}_i||x||^2_\mathcal{X} \leq ||x||^2_\mathcal{H} \leq \bar{\gamma}_s||x||^2_\mathcal{X}
 \end{equation}
holds.
\begin{thm}\label{th:normeq}
    Let $||x||^2_\mathcal{H}$ be a bilinear form according to \eqref{eq:H}.
    With $\mathcal{Q}$ as in \eqref{eq:ass1} and the assumptions \eqref{eq:ass2}-\eqref{eq:ass7} as well as
     \begin{equation} \label{eq:cond}
        \{q \in \mathcal{Q} : ||q||^2_{\mathcal{H}_q} + ||q||^2_{H_B} = 0 \} = \{0\} \, , \tag{A7}
     \end{equation}
    ensuring that only the trivial kernel can exist, equation \eqref{eq:ineq} can be satisfied and $||x||^2_\mathcal{H}$ is equivalent to the natural norm on the state space $||x||^2_\mathcal{X}$.
\end{thm}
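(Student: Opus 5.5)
We prove the two inequalities in \eqref{eq:ineq} separately, treating the $q$- and $p$-parts independently since the norms decouple.

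\textbf{Upper bound.} For the momentum part, \eqref{eq:upper} with $0\le M^{-1}\le \gamma_i^{-1} I$ (by \eqref{eq:ass6}) gives $||p||^2_{\mathcal{H}_p}\le \frac{1}{2\gamma_i}||p||^2_{L^2}$. For the displacement part, we expand $||q||^2_{\mathcal{H}_q}=\frac12\int_0^L q^\mathrm{T}Aq+2q^\mathrm{T}Bq_\xi+q_\xi^\mathrm{T}Cq_\xi\,\mathrm{d}\xi$. We bound the diagonal terms with \eqref{eq:upper} and \eqref{eq:ass3}, and the cross term with \eqref{eq:upperB} and \eqref{eq:ass4} together with Young's inequality. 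This yields $||q||^2_{\mathcal{H}_q}\le c\,||q||^2_{W^{1,2}}$. The boundary part satisfies $||q||^2_{H_B}\le\frac12\sigma_{\max}(T)(||q(L)||^2+||q(0)||^2)$, which the trace inequalities \eqref{eq:trace} bound by $\frac12\sigma_{\max}(T)(\gamma_0^2+\gamma_L^2)||q||^2_{W^{1,2}}$. Summing the three estimates gives $\bar\gamma_s$.

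\textbf{Lower bound for $p$.} Since $M\le\gamma_s I$, we have $M^{-1}\ge\gamma_s^{-1}I$, and \eqref{eq:lower} gives $||p||^2_{\mathcal{H}_p}\ge\frac{1}{2\gamma_s}||p||^2_{L^2}$.

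\textbf{Lower bound for $q$.} This is the main obstacle, because $A$ may be singular, so no pointwise coercivity in $q$ is available. We apply the Peetre--Tartar Lemma (Theorem~\ref{th:peetre}) on the Banach space $\mathcal{Q}$, a closed subspace of $W^{1,2}$ since the trace map is continuous.

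\emph{Choice of $\mathcal{U}$.} Write $S=S^{1/2}S^{1/2}$ (valid since $S\ge0$, with entries bounded by \eqref{eq:ass3}, \eqref{eq:ass4}) and $T=T^{1/2}T^{1/2}$. Define
\[
\mathcal{U}(q)=\Big(\tfrac{1}{\sqrt2}S^{1/2}\begin{bmatrix}q\\ q_\xi\end{bmatrix},\ \tfrac{1}{\sqrt2}T^{1/2}q_\partial\Big)\in L^2([0,L];\mathbb{R}^{2n})\times\mathbb{R}^{2n}.
\]
Then $||\mathcal{U}(q)||^2=||q||^2_{\mathcal{H}_q}+||q||^2_{H_B}$. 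The operator $\mathcal{U}$ is bounded by the upper-bound step, and it is injective exactly by \eqref{eq:cond}.

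\emph{Choice of $\mathcal{V}$.} Take $\mathcal{V}:\mathcal{Q}\to L^2$, $\mathcal{V}(q)=q$. It is compact by the Rellich--Kondrachov embedding $W^{1,2}\hookrightarrow L^2$ on the bounded interval.

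\emph{Verifying the hypothesis.} We need $\alpha||q||_{W^{1,2}}\le||\mathcal{U}q||+||\mathcal{V}q||$. Completing the square with \eqref{eq:ass5},
\[
\begin{bmatrix}q\\ q_\xi\end{bmatrix}^\mathrm{T}S\begin{bmatrix}q\\ q_\xi\end{bmatrix}=q^\mathrm{T}(A-BC^{-1}B^\mathrm{T})q+(q_\xi+C^{-1}B^\mathrm{T}q)^\mathrm{T}C(q_\xi+C^{-1}B^\mathrm{T}q)\ge\gamma_i\,||q_\xi+C^{-1}B^\mathrm{T}q||^2 .
\]
Since $||C^{-1}B^\mathrm{T}||\le\gamma_s/\gamma_i$ by \eqref{eq:ass4} and \eqref{eq:ass6}, the triangle inequality gives
\[
||q_\xi||_{L^2}\le \gamma_i^{-1/2}\sqrt{2}\,||q||_{\mathcal{H}_q}+(\gamma_s/\gamma_i)\,||q||_{L^2}.
\]
Hence
\[
||q||_{W^{1,2}}\le||q||_{L^2}+||q_\xi||_{L^2}\le c_1||\mathcal{U}q||+c_2||\mathcal{V}q||,
\]
using $||q||_{H_B}\ge0$, which is the required hypothesis.

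\textbf{Conclusion.} The Peetre--Tartar Lemma yields $\beta||q||_{W^{1,2}}\le||\mathcal{U}q||$, i.e.\ $\beta^2||q||^2_{W^{1,2}}\le||q||^2_{\mathcal{H}_q}+||q||^2_{H_B}$. Combining with the $p$-estimate, we obtain \eqref{eq:ineq} with $\bar\gamma_i=\min(\beta^2,\frac{1}{2\gamma_s})$.
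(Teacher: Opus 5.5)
Your proof is correct and takes essentially the same route as the paper: the same $p$/$q$ splitting, the same upper bounds via \eqref{eq:upper}, \eqref{eq:upperB} and the trace inequalities, and the Peetre--Tartar lemma with the same operators $\mathcal{U}$ and $\mathcal{V}(q)=q$, injectivity from \eqref{eq:cond} and compactness of the embedding into $L^2$. The only minor difference is in verifying the Peetre--Tartar hypothesis: you complete the square explicitly to bound $\|q_\xi\|_{L^2}$, whereas the paper replaces $A$ by $\bar A = A+2I$ and invokes uniform positive definiteness of the resulting $\bar S$ --- the same Schur-complement idea, with your version giving explicit constants.
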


For the proof we refer to the Appendix.

In this paper we will consider Hamiltonians of the special form $\mathcal{H} = \bar{\mathcal{H}}_q(q, q_\xi, q, q_\xi) + \bar{\mathcal{H}}_p(p, p)$ thus, the ipH-V system \eqref{eq:iphd} corresponds to a set of linear PDEs.
Furthermore, the energy norm can be rewritten as
$
    ||x||^2_\mathcal{H} = \int_0^L \mathcal{H} \mathrm{d} \xi + ||q||^2_{H_B}\, .
$
Next, we want to study how $||x||^2_\mathcal{H}$ changes along solutions of \eqref{eq:iphd}.
We can use \eqref{eq:lie} and replace $v$ with \eqref{eq:iphd} and $\mathfrak{F}$ with $\mathfrak{H}$.
This results in
$
        \frac{d}{dt}||x||^2_\mathcal{H} = -\int_0^L (\delta \mathfrak{H})\mathcal{R}(\delta \mathfrak{H})^\mathrm{T} + ((\delta^\partial_q \mathfrak{H})\dot{q} )\bigr|_0^L + q_\partial^\mathrm{T} T \dot{q}_\partial
$
which states that the change of energy in the system is determined by dissipation within the spatial domain and power exchange through the boundary.
Using the fact that $\mathcal{R}_{22}$ is positive semidefinite (see \eqref{eq:ass3}), we obtain
 \begin{equation} \label{eq:power}
     \frac{d}{dt}||x||^2_\mathcal{H} \leq \left(\begin{bmatrix}
        \delta_q^{\partial} \mathfrak{H}\bigr|_L &
        -\delta_q^{\partial} \mathfrak{H}\bigr|_0
    \end{bmatrix} + q_\partial^\mathrm{T} T\right)\dot{q}_\partial \, .
 \end{equation}
For the considered Hamiltonian density, the boundary contributions arise exclusively from the spatial derivatives of the generalized displacements $q$. Following the terminology in \cite{zwart_linear_nodate}, we introduce the boundary effort vector $e_\partial$ as
 \begin{equation*}
    e_\partial = \begin{bmatrix}
        \delta_q^{\partial} \mathfrak{H}\bigr|_L &
        -\delta_q^{\partial} \mathfrak{H}\bigr|_0
    \end{bmatrix}^\mathrm{T} =  \begin{bmatrix}
        \left(B^\mathrm{T}q + C q_\xi \right)\bigr|_L \\
        -\left(B^\mathrm{T}q + C q_\xi \right)\bigr|_0
    \end{bmatrix}.
 \end{equation*}
For the subsequent analysis, we impose a set of $2n$ boundary conditions combining Robin and Dirichlet conditions as
 \begin{equation} \label{eq:WB}
    \begin{bmatrix}
        P & I-P
    \end{bmatrix} \begin{bmatrix}
        e_\partial + T q_\partial + K \dot{q}_\partial \\
        q_\partial
    \end{bmatrix} = 0 \, ,
 \end{equation}
with
 \begin{equation} \label{eq:ass9}
    K = K^\mathrm{T} \geq  0 \tag{A8}
 \end{equation}
and $P$ as in \eqref{eq:ass1}.
The complementary orthogonal projectors $P$ and $I-P$ decompose the boundary space into subspaces subject to Robin and Dirichlet boundary conditions, respectively.
Under these assumptions, we can rewrite the power balance \eqref{eq:power} as
 \begin{equation*}
    \begin{aligned}
        &\frac{d}{dt}||x||^2_\mathcal{H} \leq (e^\mathrm{T}_\partial + q_\partial^\mathrm{T} T) \dot{q}_\partial \\
        & \qquad = (e^\mathrm{T}_\partial + q_\partial^\mathrm{T}T + \dot{q}_\partial^\mathrm{T} K) \dot{q}_\partial -\dot{q}_\partial^\mathrm{T} K \dot{q}_\partial = -\dot{q}_\partial^\mathrm{T} K \dot{q}_\partial \leq 0\, ,
    \end{aligned}
 \end{equation*}
where we used the boundary conditions to eliminate the indefinite term.
Furthermore, $\dot{q}_\partial$ can be expressed as
 \begin{equation*}
    \dot{q}_\partial = (M^{-1}p)_\partial = \begin{bmatrix}
        M^{-1}p \bigr|_L \\
        M^{-1}p \bigr|_0
    \end{bmatrix} \, .
 \end{equation*}

To apply the functional-analytic results introduced above, we represent the system of PDEs as a differential operator.
Accordingly, we can write \eqref{eq:iphd} as
 \begin{equation} \label{eq:operator}
    \dot{x} = \mathcal{A}x = \left(\mathcal{J} - \mathcal{R}\right)(\delta \mathfrak{H})^\mathrm{T}
 \end{equation}
with the domain $\mathcal{D}\left(\mathcal{A}\right)$ of the operator
 \begin{equation*}
    \begin{aligned}
        \mathcal{D}\left(\mathcal{A}\right) :=
        &\{x \in \mathcal{X} \bigr| M^{-1}p \in \mathcal{Q}, w \in W^{1, 2}, \\ 
        & \quad \begin{bmatrix}
        P & I-P
    \end{bmatrix} \begin{bmatrix}
        e_\partial + T q_\partial + K (M^{-1}p)_\partial \\
        q_\partial
    \end{bmatrix} = 0\} \,,
    \end{aligned}
 \end{equation*}
where $w = B^\mathrm{T} q + C q_\xi$ denotes the flux associated with the PDE system.
This physically motivated construction of $w$ and $\mathcal{D}\left(\mathcal{A}\right)$ also allows for systems with discontinuous $q_\xi$.
To conclude this section, we relate the energy balance to the dissipativity of the system operator.
Using \eqref{eq:operator} together with the energy inner product \eqref{eq:energiIP} we obtain
 \begin{equation} \label{eq:dissi}
\begin{aligned}
    \frac{d}{dt}||x||^2_\mathcal{H} &= \bigl<\mathcal{A}x, x\bigr>_\mathcal{H} + \bigl<x, \mathcal{A}x\bigr>_\mathcal{H} \\
    &= 2\mathrm{Re} \bigl< \mathcal{A}x, x \bigr>_\mathcal{H} \leq 0 \, ,
\end{aligned}
 \end{equation}
which immediately results in the operator $\mathcal{A}$ being dissipative on the Hilbert space $\mathcal{X}$ with norm $||x||_\mathcal{H}$.
\begin{remark}
    The proposed system class already covers several common control structures. Boundary damping injection and proportional boundary control can be incorporated through a suitable choice of $T$, $K$ and $P$.
    Additionally, the system class under investigation can also cover the analysis of the error system regarding in-domain distributed damping and proportional control, which can be represented through modifications of $A$ or $\mathcal{R}_{22}$. 
\end{remark}

\section{Main Theorem}\label{sec:main}

Based on the system representation and the functional-analytic results established in the preceding sections, we now state the main theorem, which provides sufficient conditions for the generation of a contraction semigroup.
\begin{thm} \label{th:main}
    Let $\mathcal{A}$ be a linear operator associated with an ipH-V system as defined in \eqref{eq:operator} with domain $\mathcal{D}(\mathcal{A})$ and  $\mathcal{Q}$ as in \eqref{eq:ass1}.
    Under the assumptions \eqref{eq:ass2}-\eqref{eq:ass9}, $\mathcal{A}$ is the infinitesimal generator of a contraction semigroup on the Hilbert space $\mathcal{X}$ with norm $||x||_\mathcal{H}$ if the boundary conditions can be written in the form \eqref{eq:WB}.
\end{thm}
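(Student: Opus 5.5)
We prove Theorem~\ref{th:main} through the Lumer-Phillips Theorem~\ref{th:lumer}, in two steps: dissipativity of $\mathcal{A}$ with respect to $\langle\cdot,\cdot\rangle_\mathcal{H}$, and the range condition. By Theorem~\ref{th:pazy} it suffices to prove $\mathrm{ran}(\lambda_0 I-\mathcal{A})=\mathcal{X}$ for one convenient $\lambda_0>0$; we take $\lambda_0=1$. We use Theorem~\ref{th:normeq} (assuming \eqref{eq:cond}, which is needed for $\|\cdot\|_\mathcal{H}$ to be a norm) so that $(\mathcal{X},\|\cdot\|_\mathcal{H})$ is a Hilbert space topologically equivalent to $(\mathcal{X},\|\cdot\|_\mathcal{X})$.

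\emph{Dissipativity.} We make \eqref{eq:dissi} rigorous for $x\in\mathcal{D}(\mathcal{A})$ rather than along classical solutions. Writing $\mathcal{A}x=(M^{-1}p,\; D_\xi w - Aq - Bq_\xi... )$, more precisely $\mathcal{A}x=(M^{-1}p,\;D_\xi w-(Aq+Bq_\xi)-\mathcal{R}_{22}M^{-1}p)$, we compute $\langle\mathcal{A}x,x\rangle_\mathcal{H}$ directly. The $q$-part gives $\int (M^{-1}p)^\mathrm{T}(Aq+Bq_\xi)+(D_\xi M^{-1}p)^\mathrm{T}w\,\mathrm d\xi$ plus the $T$-term. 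The $p$-part gives $\int (M^{-1}p)^\mathrm{T}(D_\xi w - Aq-Bq_\xi-\mathcal{R}_{22}M^{-1}p)\,\mathrm d\xi$, up to the common factor $\tfrac12$. Since $M^{-1}p,w\in W^{1,2}$, integration by parts for products of $W^{1,2}$ functions is valid. The bulk terms cancel, leaving $-\int (M^{-1}p)^\mathrm{T}\mathcal{R}_{22}M^{-1}p$ plus the boundary term $(e_\partial+Tq_\partial)^\mathrm{T}(M^{-1}p)_\partial$. Since $M^{-1}p\in\mathcal{Q}$, we have $(M^{-1}p)_\partial=P(M^{-1}p)_\partial$. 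Using $P(e_\partial+Tq_\partial+K(M^{-1}p)_\partial)=0$, this boundary term equals $-(M^{-1}p)_\partial^\mathrm{T}K(M^{-1}p)_\partial\le0$. Hence $\mathrm{Re}\langle\mathcal{A}x,x\rangle_\mathcal{H}\le0$ by \eqref{eq:ass3} and \eqref{eq:ass9}.

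\emph{Range condition (main obstacle).} Given $(f,g)\in\mathcal{X}$, we seek $x\in\mathcal{D}(\mathcal{A})$ with $x-\mathcal{A}x=(f,g)$. The first component gives $q-M^{-1}p=f$, so $p=M(q-f)$, and $q\in\mathcal{Q}$ forces $M^{-1}p=q-f\in\mathcal{Q}$ automatically. Substituting into the second equation yields the elliptic problem
\[
(M+\mathcal{R}_{22})q - D_\xi w + Aq+Bq_\xi = g+(M+\mathcal{R}_{22})f,
\]
with Robin condition $P(e_\partial+(T+K)q_\partial)=PK f_\partial$ and Dirichlet condition $(I-P)q_\partial=0$. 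We put this in weak form on $\mathcal{Q}$ by testing with $v\in\mathcal{Q}$ and integrating by parts, so the boundary term becomes $v_\partial^\mathrm{T}e_\partial=v_\partial^\mathrm{T}P e_\partial$. The resulting bilinear form is
\[
a(q,v)=\int v^\mathrm{T}(M+\mathcal{R}_{22})q+2\bar{\mathcal{H}}_q(v,v_\xi,q,q_\xi)\,\mathrm d\xi+v_\partial^\mathrm{T}(T+K)q_\partial ,
\]
and the right-hand side is $F(v)=\int v^\mathrm{T}(g+(M+\mathcal{R}_{22})f)\,\mathrm d\xi+v_\partial^\mathrm{T}Kf_\partial$.

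Boundedness of $a$ and $F$ on $W^{1,2}$ follows from \eqref{eq:upper}, \eqref{eq:upperB}, \eqref{eq:ass3}, \eqref{eq:ass4} and the trace inequalities \eqref{eq:trace}. Coercivity uses $S\ge0$, $T,K\ge0$ and \eqref{eq:ass6}. Writing $\int q^\mathrm{T}Mq+2\|q\|^2_{\mathcal{H}_q}$ and completing the square with \eqref{eq:ass5} gives $2\|q\|^2_{\mathcal{H}_q}\ge\int (q_\xi+C^{-1}B^\mathrm{T}q)^\mathrm{T}C(q_\xi+C^{-1}B^\mathrm{T}q)$. Combined with $\gamma_i\|q\|^2_{L^2}$ from $M$, this controls $\|q_\xi\|_{L^2}^2$ up to a multiple of $\|q\|_{L^2}^2$. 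This step is delicate: if it does not close directly, we instead use the equivalence from Theorem~\ref{th:normeq} together with the $M$-term to conclude $a(q,q)\ge c\|q\|^2_{W^{1,2}}$. Lax-Milgram then gives a unique weak solution $q\in\mathcal{Q}$.

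\emph{Regularity.} It remains to show that the weak solution lies in $\mathcal{D}(\mathcal{A})$. Testing with $v$ compactly supported and using \eqref{eq:weak} shows that $w=B^\mathrm{T}q+Cq_\xi$ has weak derivative $D_\xi w\in L^2$, so $w\in W^{1,2}$ and the PDE holds a.e. Integrating by parts back for general $v\in\mathcal{Q}$ leaves $v_\partial^\mathrm{T}(e_\partial+(T+K)q_\partial-Kf_\partial)=0$ for all $v_\partial\in\mathrm{ran}\,P$. We must check that traces of $\mathcal{Q}$ fill $\mathrm{ran}\,P$, which follows by constructing smooth functions with prescribed end values. This recovers the Robin condition, so $x\in\mathcal{D}(\mathcal{A})$ and the range condition is proven; Theorem~\ref{th:lumer} concludes.
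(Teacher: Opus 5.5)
Your proof is correct, but you obtain the range condition by a different route than the paper. The paper solves $\mathcal{A}x=(r,s)$, i.e.\ it proves $0\in\rho(\mathcal{A})$. Its Lax--Milgram form is exactly $\langle q,v\rangle_{\mathcal{H}_q}+\langle q,v\rangle_{H_B}$, which has no zero-order term, so coercivity is taken from Theorem~\ref{th:normeq} (the Peetre--Tartar lemma together with \eqref{eq:cond}). It then passes from $0\in\rho(\mathcal{A})$ to $\mathrm{ran}(\lambda_0 I-\mathcal{A})=\mathcal{X}$ for small $\lambda_0>0$ by openness of the resolvent set, and reaches $\lambda=1$ via Theorem~\ref{th:pazy}.

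You solve the resolvent equation at $\lambda=1$ directly, and the term $\int v^\mathrm{T}(M+\mathcal{R}_{22})q\,\mathrm{d}\xi$ supplies the missing $L^2$ control. Your hedge at the coercivity step is unnecessary. Your completed square gives $2\|q\|^2_{\mathcal{H}_q}\ge\gamma_i\|q_\xi+C^{-1}B^\mathrm{T}q\|^2_{L^2}\ge\tfrac{\gamma_i}{2}\|q_\xi\|^2_{L^2}-\tfrac{\gamma_s^2}{\gamma_i}\|q\|^2_{L^2}$. Using only the fraction $\varepsilon=\min\{1,\gamma_i^2/(2\gamma_s^2)\}$ of this term, added to $\gamma_i\|q\|^2_{L^2}$, yields $a(q,q)\ge\tfrac{\gamma_i}{2}\|q\|^2_{L^2}+\tfrac{\varepsilon\gamma_i}{2}\|q_\xi\|^2_{L^2}$. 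You should state this rather than fall back on Theorem~\ref{th:normeq}.

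What your route buys:
\begin{itemize}
\item The range condition (for every $\lambda>0$) then needs neither \eqref{eq:cond} nor the compactness argument behind Theorem~\ref{th:peetre}. Assumption \eqref{eq:cond} enters only to make $\|\cdot\|_\mathcal{H}$ a norm.
\item The perturbation argument and Theorem~\ref{th:pazy} become superfluous.
\item Your dissipativity computation is carried out on $\mathcal{D}(\mathcal{A})$, using integration by parts of the $W^{1,2}$ functions $M^{-1}p$ and $w$. Together with your explicit check that the traces of $\mathcal{Q}$ exhaust $\mathrm{ran}\,P$, this is more careful than the paper's derivation of \eqref{eq:dissi} from the power balance along not-yet-constructed solutions.
\end{itemize}

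What the paper's route buys: bounded invertibility of $\mathcal{A}$ comes as a by-product, which is useful for later asymptotic-stability arguments. Its Lax--Milgram form also coincides with the energy inner product, so boundedness and coercivity are read off directly from Theorem~\ref{th:normeq}.
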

\begin{pf}
In order to apply Theorem \ref{th:lumer} we need to show the dissipativity of $\mathcal{A}$ as well as the range condition $\mathrm{ran}\left(I - \mathcal{A}\right) = \mathcal{X}$.
Dissipativity of $\mathcal{A}$ follows directly from \eqref{eq:dissi}.
According to \cite{liu_semigroups_nodate}, $\mathrm{ran}\left(\lambda_0 I - \mathcal{A}\right) = \mathcal{X}$ holds for a sufficiently small $\lambda_0 > 0$ if $\mathcal{A}^{-1}$ exists and is bounded.
Thus, using Theorem \ref{th:pazy} implies that $\mathrm{ran}\left(\lambda I - \mathcal{A}\right) = \mathcal{X}$ also holds for $\lambda =1$.
To establish the invertibility of $\mathcal{A}$, we consider the operator equation
 \begin{equation} \label{eq:inv}
        \mathcal{A} \begin{bmatrix}
            q \\
            p
        \end{bmatrix} = \begin{bmatrix}
            r \\
            s
        \end{bmatrix}, 
 \end{equation}
for arbitrary $\begin{bmatrix}
            r^\mathrm{T} & s^\mathrm{T}
        \end{bmatrix}^\mathrm{T} \in \mathcal{X}$ and seek a unique solution $\begin{bmatrix}
            q^\mathrm{T} & p^\mathrm{T}
        \end{bmatrix}^\mathrm{T} \in \mathcal{D}(\mathcal{A})$.
The expanded evolution equations \eqref{eq:operator}
 \begin{equation*} \label{eq:extended}
        \begin{bmatrix}
                    M^{-1}p \\
                    d_\xi \left(B^\mathrm{T} q + C q_\xi\right) - B q_\xi - Aq - \mathcal{R}_{22} M^{-1}p
                \end{bmatrix} = \begin{bmatrix}
            r \\
            s
        \end{bmatrix}
 \end{equation*}
can be solved for $p$, as $M  \geq \gamma_i I$ resulting in $p = Mr$.
The result can be substituted into the second line, yielding
$
        \frac{1}{2}\left(-d_\xi \left(B^\mathrm{T} q + C q_\xi\right) + B q_\xi + Aq\right) = -\frac{1}{2}\left(s + \mathcal{R}_{22}r\right)=g$.
According to \cite{liu_semigroups_nodate} the weak solution of this equation is sufficient to prove the invertibility.
Therefore, the equation is multiplied by a test function $v \in \mathcal{Q}$ and integrated over the whole domain
        $\frac{1}{2}\int_0^L -v^\mathrm{T} d_\xi \left(B^\mathrm{T} q + C q_\xi\right) + v^\mathrm{T}B q_\xi + v^\mathrm{T} Aq \mathrm{d} \xi = \int_0^L v^\mathrm{T}g \mathrm{d} \xi$.
Integration by parts results in
 \begin{equation} \label{eq:aqv}
    \begin{aligned}
        \frac{1}{2}\int_0^L \partial_\xi v^\mathrm{T} C q_\xi + \partial_\xi v^\mathrm{T} B^\mathrm{T} q + v^\mathrm{T} B q_\xi + v^\mathrm{T} A q \mathrm{d} \xi\\ - \frac{1}{2} v_\partial^\mathrm{T}e_\partial = \int_0^L v^\mathrm{T}g \mathrm{d} \xi \,,
    \end{aligned}
 \end{equation}
with $v^\mathrm{T}_\partial = \begin{bmatrix}
    v^\mathrm{T}\bigr|_L & v^\mathrm{T}\bigr|_0
\end{bmatrix}$.
To incorporate the boundary contributions into the variational formulation, \eqref{eq:WB} is used. Since the test functions satisfy $v \in \mathcal{Q}$, we have $(I-P)v_\partial=0$, and thus
 \begin{equation*}
    \begin{aligned}
        v_\partial^\mathrm{T} e_\partial &= v_\partial^\mathrm{T} (I-P) e_\partial + v_\partial^\mathrm{T} P e_\partial \\
        & = v_\partial^\mathrm{T} P (- T q_\partial - K \dot{q}_\partial) = - v_\partial^\mathrm{T} T q_\partial - v_\partial^\mathrm{T} K \dot{q}_\partial \, .
    \end{aligned}
 \end{equation*}
Substituting $\dot{q} = M^{-1}p$ and $p = Mr$ yields $v_\partial^\mathrm{T} e_\partial =- v_\partial^\mathrm{T} T q_\partial - v_\partial^\mathrm{T} K r_\partial$, with $r_\partial = \begin{bmatrix}
    r^\mathrm{T}\bigr|_L & r^\mathrm{T} \bigr|_0
\end{bmatrix}^\mathrm{T}$ and therefore the boundary term can be split up into a contribution to $a(q, v)$ and one to $F(v)$.
Thus
 \begin{equation*}
    \begin{aligned}
        \underbrace{\frac{1}{2}\int_0^L \partial_\xi v^\mathrm{T} C q_\xi + \partial_\xi v^\mathrm{T} B^\mathrm{T} q + v^\mathrm{T} B q_\xi + v^\mathrm{T} A q \mathrm{d} \xi + \frac{1}{2}v^\mathrm{T}_\partial T q_\partial}_{a\left(q, v\right)} \\
        = \underbrace{\int_0^L v^\mathrm{T}g \mathrm{d} \xi -\frac{1}{2} v_\partial^\mathrm{T} K r_\partial}_{F\left(v\right)} \, ,
    \end{aligned}
 \end{equation*}
allows for the application of the Lax-Milgram lemma to conclude on the existence of a unique solution $q \in \mathcal{Q}$.
Consequently, we need to show that both $a(q, v)$ and $F(v)$ are bounded and $a(q, q)$ is coercive.
To do so, we will use the relations $a(q, v) = \bigl<q, v\bigr>_{\mathcal{H}_q} + \bigl<q, v\bigr>_{H_B}$ and $a(q, q) = ||q||_{\mathcal{H}_q}^2 + ||q||^2_{H_B}$.
As already shown in the proof of Theorem \ref{th:normeq}, both $||q||_{\mathcal{H}_q}^2$ and $||q||^2_{H_B}$ are bounded.
Thus, the Cauchy-Schwarz inequality yields the boundedness of $a(q, v)$ by
        $|a(q, v)| \leq ||q||_{\mathcal{H}_q}||v||_{\mathcal{H}_q} + ||q||_{H_B}||v||_{H_B} \leq \bar{\gamma}_s ||q||_{W^{1, 2}}||v||_{W^{1, 2}}$.
Similarly, coercivity follows from \eqref{eq:ineq} as
 \begin{equation} \label{eq:abound}
    \bar{\gamma}_i ||q||_{W^{1, 2}}^2 \leq ||q||_{\mathcal{H}_q}^2 + ||q||^2_{H_B} = a(q, q) \, .
 \end{equation}
Using the trace inequalities \eqref{eq:trace} together with \eqref{eq:upper} and \eqref{eq:ass3} results in the boundedness of $F(v)$ as
 \begin{equation} \label{eq:Fbound}
    \begin{aligned}
        |F\left(v\right)| &\leq |\int_0^L -v^\mathrm{T} \frac{1}{2}\left( s + \mathcal{R}_{22} r\right) \mathrm{d} \xi - \frac{1}{2}v_\partial^\mathrm{T} K r_\partial| \\
        & \leq \left( \frac{1}{2}||s||_{L^2} + \gamma_r||r||_{W^{1, 2}}\right)||v||_{W^{1, 2}}\, ,
    \end{aligned}
 \end{equation}
with $\gamma_r = \gamma_s + \frac{1}{2}((\gamma_0)^2 + (\gamma_L)^2)\sigma_{\max}(K)$.
Thus, the Lax-Milgram lemma can be applied, yielding a unique $q$ that fulfills \eqref{eq:inv}.
It remains to show $\begin{bmatrix}
    q^\mathrm{T} & p^\mathrm{T}
\end{bmatrix}^\mathrm{T} \in \mathcal{D}(\mathcal{A)}$.
Since $r \in \mathcal{Q}$ also $M^{-1}p = r \in \mathcal{Q}$.
From the definition of $w$ together with the triangle inequality, \eqref{eq:upper}, \eqref{eq:upperB}, \eqref{eq:ass3} and \eqref{eq:ass4} we can state
$
        ||w||_{L^2} = ||B^\mathrm{T} q + C q_\xi||_{L^2}
        \leq \gamma_s\left(||q||_{L^2} + ||q_\xi||_{L^2}\right) \leq \gamma_s ||q||_{W^{1, 2}}
$.
As $q \in \mathcal{Q}$ the right hand side and therefore also $||w||_{L^2}$ is bounded.
Acknowledging that in the definition of the weak derivative \eqref{eq:weak}, the test functions vanish at the boundary allows us to rewrite \eqref{eq:aqv} as
 \begin{equation*}
    \int_0^L \partial_\xi v^\mathrm{T} w \mathrm{d} \xi = - \int_0^L v^\mathrm{T} \underbrace{(Aq + B q_\xi - 2g)}_h \mathrm{d} \xi \, .
 \end{equation*}
Comparing this to \eqref{eq:weak} shows that $h$ is the weak derivative of $w$.
We have already shown $g \in L^2$.
Furthermore, by applying the triangle inequality, \eqref{eq:upper}, \eqref{eq:upperB}, \eqref{eq:ass3} and \eqref{eq:ass4},
    $||Aq + B q_\xi||_{L^2} \leq \gamma_s\left(||q||_{L^2} + ||q_\xi||_{L^2}\right) \leq \gamma_s ||q||_{W^{1, 2}}$.
Thus $||h||_{L^2}$ is bounded and $w \in W^{1, 2}$.
The Dirichlet boundary conditions are trivially fulfilled from $q \in \mathcal{Q}$, whereas the Robin boundary conditions require additional calculations.
Reordering $D_\xi w = h = Aq + Bq_\xi - 2g$ yields $\tfrac{1}{2}(-D_\xi w + Bq_\xi + Aq) = g$.
Additionally, using integration by parts, we can state
$
    \frac{1}{2}\int_0^L \partial_\xi v ^\mathrm{T}w \mathrm{d} \xi = -\frac{1}{2}\int_0^L v^\mathrm{T} D_\xi w \mathrm{d}\xi + \frac{1}{2} v_\partial^\mathrm{T} e_\partial
$.
Substituting both of these results into $a(q, v) = F(v)$ the interior terms cancel out and we are left with
 \begin{equation*}
    \begin{aligned}
        &v_\partial^\mathrm{T}(e_\partial + Tq_\partial + Kr_\partial) \\
        &\qquad = v_\partial^\mathrm{T}(I-P+P)(e_\partial + Tq_\partial + Kr_\partial) = 0 \, ,
    \end{aligned}
 \end{equation*}
which, together with $v \in \mathcal{Q}$, results in the Robin boundary conditions of the form \eqref{eq:WB}.
Thus, $\mathcal{A}^{-1}$ exists.
Finally boundedness of $\mathcal{A}^{-1}$ can be derived from $||p||_{L^2} = ||Mr||_{L^2} \leq \gamma_s ||r||_{L^2}$ as well as the combination of \eqref{eq:abound} and \eqref{eq:Fbound}, resulting in
$
        \bar{\gamma}_i ||q||_{W^{1, 2}}^2 \leq a(q, q) = |F(q)|
        \leq \left( \frac{1}{2}||s||_{L^2} + \gamma_r||r||_{W^{1, 2}}\right)||q||_{W^{1, 2}}
$
and thus $||q||_{W^{1, 2}} \leq \tfrac{1}{2\bar{\gamma}_i}||s||_{L^2} + \tfrac{\gamma_r}{\bar{\gamma}_i}||r||_{W^{1, 2}}$. \hfill$\blacksquare$
\end{pf}

\section{EXAMPLE}\label{sec:example}

In this section, we deal with a practical example, namely a Timoshenko beam that is clamped to a guided roller at $\xi = 0$ and free at $\xi = L$.
The PDEs of such a beam are given by
 \begin{equation*}
    \begin{aligned}
        \rho \ddot{w} &= d_\xi\left(kG \left(w_\xi - \psi\right)\right)\\
        J \ddot{\psi} &= kG \left(w_\xi - \psi\right) + d_\xi \left(EI \psi_\xi\right)
    \end{aligned}
 \end{equation*}
which are subject to the boundary conditions \\ $-kG\left(w_\xi - \psi\right)\bigr|_0 =u$, $kG\left(w_\xi - \psi\right)\bigr|_L =0$, $\psi\bigr|_0 = 0$ and $EI \psi_\xi\bigr|_L = 0$,
where $w$ denotes the vertical deflection of the beam, $\psi$ describes the rotation of the cross section relative to the normal of the beam middle axis and $u$ is a boundary force that can act as a control input.
Within the variational framework, the states are $w$ and $\psi$, together with the corresponding momenta $p_w = \rho \dot{w}$, $p_\psi = J \dot{\psi}$.
\footnote{Contrary to this, the Stokes-Dirac scenario uses the shear deformation $\epsilon_t = w_\xi - \psi$ and the curvature $\epsilon_r = \psi_\xi$ together with the same momenta as the variational framework as states.}
We now want to derive a proportional boundary control law that stabilizes an equilibrium $w_s$, $\psi_s = 0$.
In order to do so we can introduce the error states $\tilde{w} = w - w_s$, $\tilde{\psi} = \psi-\psi_s$, $p_{\tilde{w}} = \rho \dot{\tilde{w}}$ and $p_{\tilde{\psi}} = J \dot{\tilde{\psi}}$
which allow us to formulate a Hamiltonian $\tilde{\mathcal{H}} = \tilde{\mathcal{H}}_q + \tilde{\mathcal{H}}_p$ with
 \begin{equation*}
    \tilde{\mathcal{H}}_q = \frac{1}{2}\begin{bmatrix}
        \tilde{w} \\
        \tilde{\psi} \\
        \tilde{w}_\xi \\
        \tilde{\psi}_\xi
    \end{bmatrix}^\mathrm{T} \begin{bmatrix}
        0 & 0 & 0 & 0 \\
        0 & kG & -kG & 0  \\
        0 & -kG & kG & 0 \\
        0 & 0 & 0 & EI 
    \end{bmatrix}
    \begin{bmatrix}
        \tilde{w} \\
        \tilde{\psi} \\
        \tilde{w}_\xi \\
        \tilde{\psi}_\xi
    \end{bmatrix}
 \end{equation*}
and
 \begin{equation*}
    \tilde{\mathcal{H}}_p = \frac{1}{2}\begin{bmatrix}
        p_{\tilde{w}} \\
        p_{\tilde{\psi}}
    \end{bmatrix}^\mathrm{T} \begin{bmatrix}
        \frac{1}{\rho} & 0 \\
        0 & \frac{1}{J} \\
    \end{bmatrix}
    \begin{bmatrix}
        p_{\tilde{w}} \\
        p_{\tilde{\psi}}
    \end{bmatrix}
 \end{equation*}
such that the error system of the beam relative to the equilibrium can be written in the desired port-Hamiltonian manner
 \begin{equation} \label{eq:Timoshenkoiphd}
    \underbrace{\begin{bmatrix}
        \dot{\tilde{w}} \\
        \dot{\tilde{\psi}} \\
        \dot{p}_{\tilde{w}} \\
        \dot{p}_{\tilde{\psi}}
    \end{bmatrix}}_{\dot{\tilde{x}}} = \underbrace{\begin{bmatrix}
        0 & 0 & 1 & 0 \\
        0 & 0 & 0 & 1 \\
        -1 & 0 & 0 & 0 \\
        0 & -1 & 0 & 0
    \end{bmatrix} \begin{bmatrix}
        \delta_{\tilde{w}} \tilde{\mathcal{H}} \\
        \delta_{\tilde{\psi}} \tilde{\mathcal{H}} \\
        \delta_{p_{\tilde{w}}} \tilde{\mathcal{H}} \\
        \delta_{p_{\tilde{\psi}}} \tilde{\mathcal{H}} \\
    \end{bmatrix}}_{\tilde{\mathcal{A}}\tilde{x}} \, .
 \end{equation}
The norm for the subsequent stability analysis is
$
    ||\tilde{x}||^2_{\tilde{\mathcal{H}}} = \int_0^L \tilde{\mathcal{H}} \mathrm{d} \xi + ||\tilde{q}||^2_{\tilde{H}_B}
$
and contains the boundary term $||\tilde{q}||^2_{\tilde{H}_B} = \tfrac{1}{2}\tilde{q}^\mathrm{T}_\partial T \tilde{q}_\partial$ with $\tilde{q}^\mathrm{T}_\partial = \begin{bmatrix}
        \tilde{w} \bigr|_L & \tilde{\psi} \bigr|_L & \tilde{w} \bigr|_0 & \tilde{\psi} \bigr|_0
    \end{bmatrix}$ and $T = \text{diag}(0, 0, 1, 0)$.
With $P = \text{diag}(1, 1, 1, 0)$, $K = \text{diag}(0, 0, 1, 0)$ and the boundary efforts taking the form
 \begin{equation*}
    \tilde{e}_\partial = \begin{bmatrix}
        kG \left(\tilde{w}_\xi - \tilde{\psi}\right)\bigr|_L \\
        EI \tilde{\psi}_\xi\bigr|_L \\
        -kG \left(\tilde{w}_\xi - \tilde{\psi}\right)\bigr|_0 \\
        -EI \tilde{\psi}_\xi\bigr|_0
    \end{bmatrix}
 \end{equation*}
the Robin boundary conditions can be stated according to \eqref{eq:WB} as
 \begin{equation} \label{eq:timorobin}
    \underbrace{\begin{bmatrix}
        1 & 0 & 0 & 0 \\
        0 & 1 & 0 & 0 \\
        0 & 0 & 1 & 0 \\
        0 & 0 & 0 & 0
    \end{bmatrix}}_P \left( \vphantom{\begin{bmatrix} 0 \\ 0 \\ 0 \\ 0 \end{bmatrix}}
    \tilde{e}_\partial + \smash{\underbrace{\begin{bmatrix}
        0 \\
        0 \\
        \tilde{w} \bigr|_0 \\
        0
    \end{bmatrix}}_{T \tilde{q}_\partial} + \underbrace{\begin{bmatrix}
        0 \\
        0 \\
        \dot{\tilde{w}} \bigr|_0 \\
        0
    \end{bmatrix}}_{K \dot{\tilde{q}}_\partial}} \right) = 0 \, ,
 \end{equation}
while the Dirichlet boundary conditions take the form
 \begin{equation}\label{eq:timodirich}
    \underbrace{\begin{bmatrix}
        0 & 0 & 0 & 0 \\
        0 & 0 & 0 & 0 \\
        0 & 0 & 0 & 0 \\
        0 & 0 & 0 & 1
    \end{bmatrix}}_{I-P} \underbrace{\begin{bmatrix}
        \tilde{w} \bigr|_L \\
        \tilde{\psi} \bigr|_L \\
        \tilde{w} \bigr|_0 \\
        \tilde{\psi} \bigr|_0
    \end{bmatrix}}_{\tilde{q}_\partial} = 0 \, .
 \end{equation}
This results in the control law $u = - (w|_0 - w_s) - (\dot{w}|_0 - \dot{w}_s)$.
Thus, we can calculate the change of $||\tilde{x}||^2_{\tilde{\mathcal{H}}}$ to
$
    \frac{d}{dt}||\tilde{x}||^2_{\tilde{\mathcal{H}}} =  (\tilde{e}^\mathrm{T}_\partial + \tilde{q}^\mathrm{T}_\partial T) \dot{\tilde{q}}_\partial = -(\dot{w}|_0 - \dot{w}_s)^2 \leq 0
$
and conclude that $\tilde{\mathcal{A}}$ is dissipative.
Furthermore, the Dirichlet boundary conditions \eqref{eq:timodirich} together with the boundary proportional control ensure the validity of \eqref{eq:cond}.
As a result, we can state that \eqref{eq:Timoshenkoiphd} together with the boundary conditions \eqref{eq:timorobin} and \eqref{eq:timodirich} is indeed the infinitesimal generator of a contraction semigroup according to Theorem \ref{th:main} and therefore the error system is stable.

\section{CONCLUSIONS}

We were able to show that, although the variational formalism differs greatly from the Stokes-Dirac formalism, similar, easily verifiable conditions for the generation of a contraction semigroup by the system operator can be derived.
This marks a first step towards bridging the gap between the differential geometric system description and functional analytic methods, which are required for several proofs related to such ipH-V systems.
One advantage of the approach using the Peetre-Tartar and Lax-Milgram theorems is, that it can be extended towards systems with two dimensional spatial domain or second order Hamiltonian, which will be the topic of future research.





\bibliography{ifacconf.bib}             
                                                   







\appendix
\section{Proof of Theorem \ref{th:normeq}}
\begin{pf}
    To show the equivalence of the energy norm and the natural state space norm we can split up $||x||^2_\mathcal{H}$ into a part depending on $p$ and a part depending on $q$.
    Thus we seek positive constants $\bar{\gamma}_i$ and $\bar{\gamma}_s$ such that $\bar{\gamma}_i ||p||^2_{L^2} \leq ||p||^2_{\mathcal{H}_p} \leq \bar{\gamma}_s ||p||^2_{L^2}$ and $\bar{\gamma}_i ||q||^2_{W^{1,2}} \leq ||q||^2_{\mathcal{H}_q} + ||q||^2_{H_B}\leq \bar{\gamma}_s ||q||^2_{W^{1,2}}$ hold.
    Examining $||p||^2_{\mathcal{H}_p}$ an upper and lower bound exist according to \eqref{eq:lower} and \eqref{eq:upper} together with \eqref{eq:ass3} and \eqref{eq:ass6}.
    Similarly, an upper bound for $||q||^2_{H_B}$ can be derived using $\sigma_{\max}(T)$ together with the trace inequalities \eqref{eq:trace}.
    Applying \eqref{eq:upper} and \eqref{eq:upperB} together with \eqref{eq:ass3} and \eqref{eq:ass4} allows to conclude on an upper bound for $||q||^2_{\mathcal{H}_q}$.
    In order to show the existence of a lower bound for $||q||^2_{\mathcal{H}_q} + ||q||^2_{H_B}$ we will apply Theorem \ref{th:peetre}.
    Therefore, we first introduce the Hilbert space $\mathcal{Q}$ according to \eqref{eq:ass1} as well as the Banach spaces $\mathcal{Y} = L^2 \times \mathbb{R}^{2n}$ and $\mathcal{Z} = L^2$ together with their corresponding norms $||.||^2_\mathcal{Q} = ||.||^2_{W^{1, 2}}$, $||\begin{bmatrix}
        a & b
    \end{bmatrix}^\mathrm{T}||^2_\mathcal{Y} = ||a||^2_{L^2} + ||b||^2$
    and $||.||^2_\mathcal{Z} = ||.||^2_{L^2}$.
    Based on \eqref{eq:ass1}, we can introduce the map $\tau (q) = (I-P)q_\partial$ and declare $\mathcal{Q}$ as the nullspace of this map $\mathcal{Q} = \ker (\tau)$.
    Furthermore, by using the trace inequalities \eqref{eq:trace} the continuity of $\tau$ can be shown as
$
        ||\tau(q) - \tau(\bar{q})|| = ||(I- P)(q_\partial - \bar{q}_\partial)||
        \leq \sigma_{\max}(I-P) (\gamma_0+\gamma_L) ||q - \bar{q}||_{W^{1, 2}}
$.
    As a result, $\mathcal{Q}$ is a closed subspace of the Hilbert space $W^{1, 2}$ and therefore also a Banach space (\cite{brezis_functional_2011}).
    Next, we can introduce the operators $\mathcal{U}^\mathrm{T}(q) = \tfrac{1}{\sqrt2}\begin{bmatrix}\begin{bmatrix}
                q^\mathrm{T} &
                q_\xi^\mathrm{T}
            \end{bmatrix}
            (S^{\frac{1}{2}})^\mathrm{T}  &
            q_\partial^\mathrm{T} (T^{\frac{1}{2}})^\mathrm{T}
        \end{bmatrix}$ and $\mathcal{V}(q) = q$,
    with $(S^{\frac{1}{2}})^\mathrm{T}S^{\frac{1}{2}} = S$ and $(T^{\frac{1}{2}})^\mathrm{T}T^{\frac{1}{2}} = T$, such that their corresponding norms are $||\mathcal{U}(q)||^2_\mathcal{Y} = ||q||^2_{\mathcal{H}_q} + ||q||^2_{H_B}$ and $||\mathcal{V}(q)||^2_\mathcal{Z} = ||q||^2_{L^2}$ respectively.
    To show the injectivity of $\mathcal{U}(q)$ we can use \eqref{eq:cond}.
    Since $\mathcal{U}(q) = 0$ implies $||q||^2_{\mathcal{H}_q} + ||q||^2_{H_B} = 0$, $q=0$ follows from \eqref{eq:cond}, therefore ensuring the injectivity of $\mathcal{U}(q)$.
    The compactness of $\mathcal{V}(q)$ follows as $\mathcal{Q}$ compactly embeds in $\mathcal{Z}$ (\cite{brezis_functional_2011}).
    Lastly, to apply Theorem \ref{th:peetre} we need to show that there exists $\alpha > 0$, such that
    $
        \alpha ||q||_{W^{1, 2}}^2 \leq ||q||^2_{\mathcal{H}_q} + ||q||^2_{H_B} + ||q||^2_{L^2}
    $
    holds.
    The addition of $||q||^2_{L^2}$ can be reformulated to calculating $||q||_{\mathcal{H}_q}^2$ with an extended matrix $\bar{A} = A + 2I$.
    From $0 \leq A - B C^{-1} B^\mathrm{T}$, it is clear that $2I \leq \bar{A} - B C^{-1} B^\mathrm{T}$, which, together with \eqref{eq:ass6}, ensures the uniform positive definiteness of $\bar{S}$ and therefore the existence of a lower bound $\alpha I \leq \bar{S}$ with $\alpha > 0$.
    Thus, Theorem \ref{th:peetre} allows us to conclude that there exists $\beta > 0$ such that
    $
        \beta ||q||^2_{W^{1, 2}} \leq ||q||^2_{\mathcal{H}_q} + ||q||^2_{H_B}
    $
    holds. \hfill$\blacksquare$
\end{pf}
\end{document}